\title{Enumerating $k$th Roots in the Symmetric Inverse Monoid}
\author{Christopher W. York}
\begin{document}

\newtheorem{definition}{Definition}
\newtheorem{theorem}{Theorem}
\newtheorem{lemma}{Lemma}
\newtheorem{Corollary}{Corollary}
\newtheorem{proposition}{Proposition}
\newtheorem{remark}{Remark}
\newcommand{\freq}{\mathop{\mathrm{freq}}}
\newcommand{\SIM}{\mathop{\mathrm{SIM}}}

\maketitle

\begin{abstract}
The symmetric inverse monoid, $\SIM(n)$, is the set of all partial one-to-one mappings from the set $\{1,2,\cdots,n\}$ to itself under the operation of composition. Earlier research on the symmetric inverse monoid delineated the process for determining whether an element of $\SIM(n)$ has a $k$th root. The problem of enumerating $k$th roots of a given element of $\SIM(n)$ has since been posed, which is solved in this work. In order to find the number of $k$th roots of an element, all that is needed is to know the cycle and path structure of the element. Conveniently, the cycle and cycle-free components may be considered separately in calculating the number of $k$th roots. Since the enumeration problem has been completed for the symmetric group, this paper only focuses on the cycle-free elements of $\SIM(n)$. The formulae derived for cycle-free elements of $\SIM(n)$ here utilize integer partitions, similar to their use in the expressions given for the number of $k$th roots of permutations.
\end{abstract}

\section{Introduction}
\label{intro}
The symmetric inverse monoid over a set $X$, denoted by $\SIM(X)$, is the set of all partial one-to-one mappings from $X$ to itself. When denoted by $\SIM(n)$, with $n$ a positive integer, $\SIM(n)$ refers to the set of all partial one-to-one mappings from $\{1,2,\cdots,n\}$ to itself. For instance, the element $\left(\begin{array}{cccc}1 & 2 & 3 & 4 \\ - & 2 & 4 & 3\end{array}\right)\in\SIM(n)$ maps 1 to nothing, 2 to itself, 3 to 4, and 4 to 3. Under function composition, $\SIM(n)$ forms an {\em inverse semigroup}, which is a semigroup $S$ such that for all $a\in S$, there exists $b\in S$ such that $a=aba$ and $b=bab$, and all idempotents of $S$ commute \cite{kn:Law}.

Previously, Annin et al. determined necessary and sufficient conditions for an arbitrary element $\sigma\in\SIM(n)$ to possess a $k$th root \cite{kn:Ann2}, and prior to this, they solved this problem for the symmetric and alternating groups \cite{kn:Ann1}. At the end of \cite{kn:Ann2}, they posed some interesting questions. Given an element of $\SIM(n)$, exactly how many $k$th roots does it possess? Also, what is the probability that a random element of $\SIM(n)$ will possess a $k$th root? This paper seeks to solve the former problem.

Not suprisingly, the symmetric group of order $n$, $S_n$, is a subsemigroup of $\SIM(n)$. For the purposes of this paper, let $S_A$ denote the set of permutations over the set $A$. Of crucial importance is that, as Annin et al. cite \cite{kn:Ann2}, each element of $\SIM(n)$ can be uniquely expressed as the product of disjoint cycles and paths, which are defined in the next section.

\section{Preliminary Results}

\begin{definition}
The {\em path} $[a_1a_2a_3\cdots a_n]$, where $a_1, a_2, \cdots, a_n$ are distinct, is the mapping from $a_1$ to $a_2$, $a_2$ to $a_3$, $\cdots$, $a_{n-1}$ to $a_n$, and $a_n$ to nothing.
\end{definition}

\begin{lemma}
Every element of $\SIM(n)$ can be expressed as the product of disjoint paths and cycles.
\end{lemma}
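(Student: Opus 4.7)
The natural route is via a graph-theoretic picture. Let $\sigma\in\SIM(n)$, and form the directed graph $G_\sigma$ on vertex set $\{1,2,\ldots,n\}$ with an arc $i\to\sigma(i)$ whenever $i$ lies in the domain of $\sigma$. Because $\sigma$ is a partial one-to-one map, every vertex of $G_\sigma$ has in-degree at most one (injectivity) and out-degree at most one (well-definedness of a partial function).

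From this degree bound one can show, by a short argument, that every weakly connected component of $G_\sigma$ is either a simple directed cycle or a simple directed path (with an isolated vertex counted as a trivial path). I would prove this by picking any vertex $v$ in a component, following arcs forward as far as possible to reach either a vertex with no outgoing arc or to return to $v$, and following arcs backward similarly; the degree constraints force no branching, so one of these two shapes is the entire component. Here the ``forward termination'' case gives a path, while the ``returns to $v$'' case gives a cycle.

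Each cycle component of $G_\sigma$ immediately reads off as a cycle in the usual sense, and each path component $v_1\to v_2\to\cdots\to v_m$ reads off as the path $[v_1 v_2\cdots v_m]$ in the sense of Definition~1, where $v_m$ is the (unique) sink of the component; an isolated vertex $v$ with neither in- nor out-arc becomes the trivial path $[v]$. Since the components partition $\{1,\ldots,n\}$, these cycles and paths have pairwise disjoint underlying sets, and their product maps each $i$ exactly as $\sigma$ does: if $i$ is in the domain, both send it to $\sigma(i)$, and if $i$ is not in the domain then $i$ is the terminal element of its path and both leave it undefined.

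The main obstacle is nothing deep; it is essentially the combinatorial lemma that a digraph with in- and out-degree at most one is a disjoint union of paths and cycles. The only subtlety worth pausing on is the bookkeeping for vertices not in the domain (terminal vertices of paths) and vertices not in the range (initial vertices of paths), together with completely isolated vertices, which are exactly the fixed points of the ``do nothing'' partial map. Once these boundary cases are absorbed into the trivial-path convention, the decomposition is automatic.
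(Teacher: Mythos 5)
Your argument is correct. Note, though, that the paper itself gives no proof of this lemma at all: it states it bare and, in the introduction, attributes the existence and uniqueness of the disjoint path-and-cycle decomposition to the cited work of Annin et al. So there is nothing in the paper to match your approach against; what you have supplied is the standard self-contained argument that the cited sources use, namely that the functional digraph of a partial injection has all in- and out-degrees at most one, hence decomposes into disjoint simple directed paths and cycles. Your sketch of the component analysis (walk forward until you either terminate or return to the start, walk backward symmetrically, and use injectivity to rule out re-entering the walk at an interior vertex) is the right argument, and your bookkeeping for the boundary cases is consistent with the paper's Definition~1, under which $[v]$ sends $v$ to nothing. One small quibble: describing isolated vertices as ``fixed points of the do-nothing partial map'' is misleading --- they are points outside both the domain and the range of $\sigma$, whereas a genuine fixed point $\sigma(v)=v$ is a self-loop and must be read off as the cycle $(v)$, not the path $[v]$; your construction handles this correctly, but the terminology could confuse a reader.
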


\begin{definition}
An element $\alpha\in\SIM(n)$ is a {\em single cycle} or {\em single path} element if $\alpha$ can be expressed as either $\alpha = [a_1a_2\cdots a_\ell]$ or $\alpha = (a_1a_2\cdots a_\ell)$ where $a_1, a_2, \cdots, a_\ell$ are distinct. The {\em length} of $\alpha$, denoted by $\ell(\alpha)$, equals $\ell$, and $a_1, a_2, \cdots, a_\ell$ are said to be {\em digits} of the cycle or path $\alpha$.
\end{definition}

\begin{definition}
A {\em $k$-interlacing} of the disjoint paths and/or cycles \sloppy{$\sigma_1, \sigma_2, \cdots, \sigma_n$} is a single cycle or path element $\alpha$ such that $\alpha^k = \sigma_1\sigma_2\cdots\sigma_n$, and every digit of $\alpha$ is in one of $\sigma_1,\sigma_2,\cdots,\sigma_n$, and every digit of $\sigma_1, \sigma_2, \cdots, \sigma_n$ is in $\alpha$.
\end{definition}

{\bf Remark.} Whenever mentioned, disjoint cycle decompositions will include the cycles of length one.

{\bf Examples.} The path [14725836] is a 3-interlacing of [123], [456], and [78] since the paths have the same digits as [14725836] and $[14725836]^3=[123][456][78]$. Likewise, the cycle (123456) is a 4-interlacing of (153) and (264) since $(123456)^4 = (153)(264)$.

\begin{lemma}
If $k$ is a positive integer and $\sigma_1, \sigma_2, \cdots, \sigma_n$ are disjoint paths and/or cycles, then $(\sigma_1\sigma_2\cdots\sigma_n)^k = \sigma_1^k\sigma_2^k\cdots\sigma_n^k$.
\end{lemma}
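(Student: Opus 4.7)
The statement is a direct generalisation of the fact that disjoint permutations commute in $S_n$. The plan is to argue pointwise, by induction on $k$, using the key structural observation that each single cycle or single path $\sigma_i$ maps the set of its digits (its \emph{support}) into itself, and that the supports of distinct $\sigma_i$ are pairwise disjoint. Consequently the action of $\sigma_1\sigma_2\cdots\sigma_n$ restricted to any single support coincides with the action of the $\sigma_i$ living on that support, and iteration cannot leak from one support to another.

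Concretely, I would fix $x\in\{1,2,\ldots,n\}$ and split into cases. If $x$ lies outside every $\mathrm{supp}(\sigma_i)$, both sides are undefined at $x$ and there is nothing to check. Otherwise, by disjointness, there is a unique index $i$ with $x\in\mathrm{supp}(\sigma_i)$, and I would prove by induction on $k$ that $(\sigma_1\sigma_2\cdots\sigma_n)^k(x)=\sigma_i^k(x)$, reading the equality as the stronger claim that one side is defined precisely when the other is and that they then agree. The base case $k=1$ is the definition of the disjoint product. For the induction step, observe that whenever $\sigma_i^k(x)$ is defined it lies in $\mathrm{supp}(\sigma_i)$, which is disjoint from every other support, so the next application of $\sigma_1\sigma_2\cdots\sigma_n$ invokes only the action of $\sigma_i$ and returns $\sigma_i^{k+1}(x)$; the matching condition for being undefined is verified the same way.

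To conclude, I would note that $\mathrm{supp}(\sigma_i^k)\subseteq\mathrm{supp}(\sigma_i)$, so $\sigma_1^k,\sigma_2^k,\ldots,\sigma_n^k$ again have pairwise disjoint supports and $\sigma_1^k\sigma_2^k\cdots\sigma_n^k$ is itself a legitimate disjoint product. Evaluated at any $x\in\mathrm{supp}(\sigma_i)$ it returns $\sigma_i^k(x)$, matching the pointwise formula from the induction, and the two sides therefore agree everywhere and are equal in $\SIM(n)$. The only step requiring genuine care is the bookkeeping around paths, whose iterates eventually fall off the end of the path and become undefined; framing the induction in terms of joint definedness as above absorbs this, but it is the one place where a naive adaptation of the $S_n$ argument needs explicit attention.
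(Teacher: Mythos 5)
Your argument is correct. Note that the paper states this lemma without any proof at all, treating it as routine, so there is no authorial argument to compare against; your support-tracking induction, with the careful reading of the disjoint product as agreeing with $\sigma_i$ on the digit set of $\sigma_i$ and with joint definedness handled explicitly for paths, is exactly the standard justification one would supply, and the one place you flag as needing care (iterates of a path falling off the end) is indeed the only point where the familiar $S_n$ argument must be adjusted.
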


\begin{lemma}
Let $a_1, a_2, \cdots, a_n$ be distinct, $k$ be a positive integer, and $n = pk + q$ where $0\leq q\leq k-1$. Then
\[
[a_1a_2\cdots a_n]^k  = \prod_{i=1}^q [a_ia_{i+k}\cdots a_{i+pk}] \prod_{i=q+1}^k [a_ia_{i+k}\cdots a_{i+(p-1)k}].
\]
In other words, $[a_1a_2\cdots a_n]^k$ is the product of $q$ paths of length $p+1$ and $k-q$ paths of length $p$.
\end{lemma}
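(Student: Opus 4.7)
The plan is to prove this by direct computation rather than induction. First I would verify how $\alpha = [a_1 a_2 \cdots a_n]$ acts when iterated: since $\alpha$ sends $a_j \mapsto a_{j+1}$ for $1 \le j < n$ and sends $a_n$ to nothing, a simple induction on $k$ shows that $\alpha^k$ sends $a_j \mapsto a_{j+k}$ whenever $j + k \le n$, and sends $a_j$ to nothing whenever $j + k > n$. This gives a complete description of $\alpha^k$ as a partial one-to-one map on $\{a_1, \ldots, a_n\}$, with domain $\{a_1, \ldots, a_{n-k}\}$.

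To rewrite this partial map as a product of disjoint paths, I would group the indices $\{1, 2, \ldots, n\}$ by their residue modulo $k$. For each starting index $i \in \{1, 2, \ldots, k\}$, repeatedly applying $\alpha^k$ produces the chain $a_i, a_{i+k}, a_{i+2k}, \ldots$, and this chain must terminate at the first term $a_{i+mk}$ for which $i + (m+1)k > n$; this terminal element is then sent to nothing by $\alpha^k$. By construction, each such chain is a single path, and the $k$ chains (indexed by $i = 1, \ldots, k$) are pairwise disjoint and together cover every digit of $\alpha^k$, so their product equals $\alpha^k$.

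The final step is the index bookkeeping that distinguishes the two factors in the product. Writing $n = pk + q$ with $0 \le q \le k-1$, I would check: for $1 \le i \le q$, we have $i + pk \le q + pk = n$ but $i + (p+1)k > n$, so the chain ends at $a_{i+pk}$ and has length $p + 1$; for $q+1 \le i \le k$, we have $i + (p-1)k \le n$ but $i + pk > n$, so the chain ends at $a_{i+(p-1)k}$ and has length $p$. This produces exactly the two products stated in the lemma.

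The main obstacle is simply this last index verification — making sure the inequality $i + pk \le n$ holds precisely when $i \le q$ and fails precisely when $i > q$, so that the split into $q$ longer paths and $k - q$ shorter paths is correct. Everything else amounts to unwinding the definitions of path multiplication and partial-map composition.
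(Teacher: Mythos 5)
Your argument is correct and complete: the description of $\alpha^k$ as the partial map $a_j\mapsto a_{j+k}$ (defined exactly when $j+k\leq n$), the decomposition into chains indexed by residue classes modulo $k$, and the inequality checks $i+pk\leq n \iff i\leq q$ all hold, including the degenerate case $p=0$ where the length-$p$ factors are empty. The paper states this lemma without proof, so there is no argument of the author's to compare against; your direct computation is exactly the kind of verification that was left to the reader, and nothing is missing.
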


\begin{definition}
An element $\alpha\in\SIM(n)$ is a {\em $k$th root} of the element $\sigma\in\SIM(n)$ if $\alpha^k=\sigma$. Also, $r_k(\sigma)$ denotes the number of $k$th roots $\sigma$ possesses.
\end{definition}

\begin{theorem}
The element $\alpha$ is a $k$th root of the element $\sigma$ if and only if $\alpha$ is the product of distinct, disjoint $k$-interlacings of the paths and/or cycles of the disjoint cycle and path decomposition of $\sigma$, excluding none of the paths or cycles of $\sigma$.
\end{theorem}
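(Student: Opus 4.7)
The plan is to prove the two implications separately, with the forward direction being almost immediate from Lemma~2 and the reverse direction requiring the uniqueness of disjoint cycle/path decompositions.

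For the ``if'' direction, suppose $\alpha = \alpha_1\alpha_2\cdots\alpha_m$ where each $\alpha_j$ is a $k$-interlacing of some disjoint subcollection $T_j \subseteq \{\sigma_1,\ldots,\sigma_n\}$, the $\alpha_j$ are pairwise disjoint, and the $T_j$ partition $\{\sigma_1,\ldots,\sigma_n\}$. I would apply Lemma~2 to obtain $\alpha^k = \alpha_1^k\alpha_2^k\cdots\alpha_m^k$, and then, by the definition of $k$-interlacing, substitute $\alpha_j^k = \prod_{\sigma\in T_j}\sigma$. Since the $T_j$ exhaust $\{\sigma_1,\ldots,\sigma_n\}$, this yields $\alpha^k = \sigma_1\sigma_2\cdots\sigma_n = \sigma$.

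For the ``only if'' direction, assume $\alpha^k=\sigma$ and write $\alpha = \alpha_1\alpha_2\cdots\alpha_m$ as its own disjoint cycle/path decomposition (Lemma~1). By Lemma~2, $\sigma = \alpha^k = \alpha_1^k\cdots\alpha_m^k$, and because the $\alpha_j$ are disjoint so are the $\alpha_j^k$. Each $\alpha_j^k$ is itself a product of disjoint cycles or disjoint paths: for a single path $\alpha_j$ this is exactly Lemma~3, and for a single cycle the analogous well-known factorization from the symmetric group applies. Crucially, the digits appearing in $\alpha_j^k$ are exactly the digits of $\alpha_j$. Concatenating these factorizations gives a disjoint cycle/path decomposition of $\sigma$, which by uniqueness (implicit in Lemma~1) must agree with $\sigma_1\sigma_2\cdots\sigma_n$ up to ordering. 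Hence the factors of $\alpha_j^k$ form a subset $T_j\subseteq\{\sigma_1,\ldots,\sigma_n\}$, each $\sigma_i$ lies in exactly one $T_j$, and the digits of $\alpha_j$ coincide with those of the elements of $T_j$. This is precisely the statement that $\alpha_j$ is a $k$-interlacing of the members of $T_j$. Distinctness of the interlacings follows from disjointness of the $\alpha_j$.

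The main obstacle I anticipate is the careful invocation of uniqueness of the disjoint cycle/path decomposition in the reverse direction, together with verifying that every digit of $\alpha_j$ really does appear in $\alpha_j^k$ (so that the digit-matching condition in the definition of $k$-interlacing is satisfied). For paths this is transparent from Lemma~3, and for cycles it follows since a cycle and its $k$th power always act on the same underlying set; making this observation explicit is the only nontrivial bookkeeping step. Everything else reduces to routine application of Lemma~2 and the partitioning argument above.
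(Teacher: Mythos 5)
Your proposal is correct and follows essentially the same route as the paper: both directions rest on writing $\alpha$ in its own disjoint path/cycle decomposition, applying Lemma~2 to get $\alpha^k=\alpha_1^k\cdots\alpha_m^k$, and matching the resulting factors against the unique decomposition of $\sigma$. The only cosmetic difference is that you argue the ``only if'' direction directly while the paper phrases it as a contradiction; your version in fact makes explicit the appeal to uniqueness of the decomposition and the digit-preservation of $k$th powers that the paper leaves implicit.
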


\begin{proof}
Let $\alpha$ be the product of disjoint $k$-interlacings of the paths and cycles of $\sigma$, excluding no paths or cycles of $\sigma$, and $\alpha=\alpha_1\alpha_2\cdots\alpha_m$ and $\sigma=\sigma_1\sigma_2\cdots\sigma_n$ be the disjoint path and cycle decompositions of $\alpha$ and $\sigma$ respectively. Since $\alpha_1,\cdots,\alpha_m$ are disjoint, $\alpha_1^k,\alpha_2^k,\cdots,\alpha_m^k$ must be disjoint as well; hence, no paths or cycles of $\sigma$ are repeated. Since $\alpha^k$ excludes no paths or cycles of $\sigma$, $\alpha^k=\alpha_1^k\alpha_2^k\cdots\alpha_m^k=\sigma_1\sigma_2\cdots\sigma_n=\sigma$. Therefore, $\alpha$ is a $k$th root of $\sigma$.

Now let $\alpha$ be a $k$th root of $\sigma$ and assume $\alpha$ is not the product of disjoint $k$-interlacings of the paths or cycles of $\sigma$, excluding no paths or cycles of $\sigma$. Then either there exists an $i$ such that $\alpha_i$ is not a $k$-interlacing of any paths or cycles of $\sigma$, or $\alpha^k$ does not include all paths or cycles of $\sigma$. In the latter case, $\alpha^k\neq\sigma$ as $\alpha^k$ does not have the same paths or cycles as $\sigma$ in their disjoint path and cycle decompositions, contradicting $\alpha$ being a $k$th root of $\sigma$. So there exists an $\alpha_i$ that is not a $k$-interlacing of some of the paths or cycles of $\sigma$. By this, $\alpha_i^k$ is not the product of {\em only} paths or cycles of $\sigma$. Since $\alpha^k$ contains a path or cycle not in $\sigma$, $\alpha^k\neq\sigma$. Therefore, $\alpha$ is not a $k$th root of $\sigma$, a contradiction. In total, $\alpha$ must be the product of disjoint $k$-interlacings of the paths and cycles of $\sigma$, excluding no paths or cycles of $\sigma$.  
\end{proof}

\begin{definition}
Let $P=\{\sigma_1,\cdots,\sigma_n\}$ be a nonempty set of disjoint paths and/or cycles. Then denote the number of $k$-interlacings of $\sigma_1,\cdots,\sigma_n$ by $\phi_k(P)$. If none exist, then $\phi_k(P)=0$.
\end{definition}

\begin{Corollary}
Let $\sigma_1\sigma_2\cdots\sigma_n$ be the disjoint path and cycle decomposition of $\sigma$, and $P = \{\sigma_1,\sigma_2,\cdots,\sigma_n\}$. Then 
\[
r_k(\sigma) = \sum_{\cup\{P_1,\cdots, P_m\}=P}\prod_{i=1}^m \phi_k(P_i)
\]
where the sum is taken over all set partitions $\{P_1,\cdots,P_m\}$ of $P$.
\end{Corollary}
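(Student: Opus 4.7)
The strategy is to use the preceding Theorem to set up a bijection between the $k$th roots of $\sigma$ and pairs consisting of a set partition of $P$ together with, for each block of the partition, a choice of $k$-interlacing. Once this bijection is established, the formula is just the result of partitioning the set of roots according to the induced set partition and counting each class.

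First I would invoke the Theorem: every $k$th root $\alpha$ of $\sigma$ factors uniquely as $\alpha = \alpha_1 \alpha_2 \cdots \alpha_m$, where the $\alpha_i$ are pairwise disjoint single-cycle or single-path elements and each $\alpha_i$ is a $k$-interlacing of some subcollection $P_i \subseteq P$. Since $\alpha^k = \sigma$, the union of the digit sets of the $\sigma$'s in the various $P_i$'s is all of $\{$digits of $\sigma\}$, and because the $\alpha_i$ are disjoint, no $\sigma_j$ appears in two different $P_i$'s. Thus $\{P_1,\dots,P_m\}$ is a set partition of $P$, and this partition (together with the individual $\alpha_i$'s) is uniquely determined by $\alpha$ because the disjoint path/cycle decomposition of $\alpha$ is unique.

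Next, I would fix a set partition $\{P_1,\dots,P_m\}$ of $P$ and count the $k$th roots of $\sigma$ whose induced partition is exactly this one. By definition of $\phi_k$, block $P_i$ admits $\phi_k(P_i)$ choices of $k$-interlacing; any independent choice of one interlacing $\alpha_i$ per block produces single-cycle or single-path elements with pairwise disjoint digit sets (since the blocks are disjoint collections of disjoint paths and cycles), and the forward direction of the Theorem then guarantees that $\alpha_1 \alpha_2 \cdots \alpha_m$ is a $k$th root of $\sigma$. Hence the number of $k$th roots realizing this particular partition is exactly $\prod_{i=1}^m \phi_k(P_i)$, with the convention that a partition containing a block admitting no interlacing contributes $0$.

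Summing over all set partitions of $P$ yields the formula. The main obstacle is the bookkeeping in the bijection — in particular, being confident that no $k$th root is counted under two different set partitions and that every $k$th root is counted at least once — but this reduces to uniqueness of the disjoint cycle/path decomposition of $\alpha$, which is the content of the earlier Lemma.
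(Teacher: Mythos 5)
Your proposal is correct and follows essentially the same route as the paper: apply Theorem 1 to identify $k$th roots with products of disjoint $k$-interlacings covering all of $P$, group the roots by the induced set partition of $P$, and count $\prod_i \phi_k(P_i)$ choices per partition. Your version simply makes the underlying bijection (and its reliance on uniqueness of the disjoint path/cycle decomposition) more explicit than the paper does.
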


\begin{proof}
It follows from Theorem 1 that the number of $k$th roots of $\sigma$ is the number of ways to construct products of disjoint $k$-interlacings of the paths and/or cycles of $\sigma$, excluding no paths or cycles of $\sigma$. Each way to interlace the paths and/or cycles can be given by a set partition of $P$, where each set in a partition of $P$ may or not be able to be interlaced. The number of ways to interlace the paths and/or cycles for a set partition $\{P_1,\cdots,P_m\}$ is then given by $\prod_{i=1}^m\phi_k(P_i)$. Note that when a set partition of $P$ contains a set that cannot be interlaced, then the product becomes zero.
\end{proof}

\begin{theorem}
Let $\sigma$ have a disjoint path and cycle decomposition of the form $\sigma = \alpha\beta$, where $\alpha$ is the product of disjoint cycles and $\beta$ is the product of disjoint paths. Also let $A$ be the set of all the digits of the cycles of $\alpha$ and $B$ be the set of all the digits of the paths of $\beta$. Then the number of $k$th roots of $\sigma$ is the product of the number of $k$th roots of $\alpha\in S_A$ and the number of $k$th roots of $\beta\in\SIM(B)$.
\end{theorem}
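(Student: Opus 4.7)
The plan is to apply the Corollary to $\sigma$ and show that the sum over set partitions of $P = P_\alpha \cup P_\beta$ (where $P_\alpha$ collects the cycles of $\alpha$ and $P_\beta$ collects the paths of $\beta$) factors cleanly into a product of the analogous sums over partitions of $P_\alpha$ alone and $P_\beta$ alone.

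The key structural observation I would establish first is that no $k$-interlacing can mix cycles and paths. Concretely, if $\gamma$ is a single cycle then $\gamma^k$ is a product of cycles, and if $\gamma$ is a single path then by Lemma 3, $\gamma^k$ is a product of paths. Combined with Lemma 2, this means that for any single cycle or path element $\gamma$, the disjoint path/cycle decomposition of $\gamma^k$ consists \emph{entirely} of cycles or \emph{entirely} of paths. Consequently, if $Q \subseteq P$ contains both a cycle of $\alpha$ and a path of $\beta$, then $\phi_k(Q) = 0$, since no single element $\gamma$ can have both cycles and paths among its $k$th power components.

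With that observation, the sum
\[
r_k(\sigma) \;=\; \sum_{\cup\{P_1,\ldots,P_m\}=P} \prod_{i=1}^m \phi_k(P_i)
\]
collapses to a sum over only those set partitions whose blocks are each contained wholly in $P_\alpha$ or wholly in $P_\beta$. Such a set partition of $P$ is in bijection with a pair consisting of a set partition of $P_\alpha$ and a set partition of $P_\beta$, and the product $\prod_i \phi_k(P_i)$ factors according to this bijection into the product of the contributions from the two halves. Distributing the sum over the product and applying the Corollary separately to $\alpha \in S_A$ and $\beta \in \SIM(B)$ then yields $r_k(\sigma) = r_k(\alpha)\, r_k(\beta)$.

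The main technical point, and really the only substantive step, is the \emph{no mixing} claim: that a single cycle raised to the $k$th power produces only cycles while a single path raised to the $k$th power produces only paths. The path half follows immediately from Lemma 3; the cycle half is standard but worth noting explicitly, since it is the hinge on which the whole factorization turns. Everything else is a routine bookkeeping argument about the bijection between block partitions of a disjoint union and pairs of block partitions of its components.
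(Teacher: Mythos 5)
Your proposal is correct and follows essentially the same route as the paper: both rest on the observation that a single path raised to the $k$th power yields only paths (Lemma 3) while a single cycle yields only cycles, so no block mixing cycles and paths can have a nonzero $\phi_k$, and the sum in Corollary 1 therefore factors over the induced pair of set partitions. Your write-up simply makes the bijection and the vanishing of mixed blocks more explicit than the paper does.
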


\begin{proof}
Since paths result in paths and cycles result in cycles when risen to the $k$th power, paths cannot be interlaced with cycles. By Corollary 1, it follows that the number of $k$th roots of $\sigma$ is the product of the number of ways to interlace all the cycles of $\alpha$ and the number of ways to interlace all the paths of $\beta$. By Theorem 1, this is the same as the product of the number of $k$th roots of $\alpha\in S_A$ and the number of $k$th roots of $\beta\in\SIM(B)$.
\end{proof}

From previous research on enumerating $k$th roots in $S_n$ and Theorem 2, we are able to just focus our attention to interlacing paths. For formulas on enumerating $k$th roots in $S_n$, see \cite{kn:Lea} and \cite{kn:Pav}.

\begin{lemma}
The disjoint paths $\sigma_1,\sigma_2,\cdots,\sigma_n$, not all of length one, have a $k$-interlacing if and only if $|\ell(\sigma_i)-\ell(\sigma_j)|\leq 1$ for all $i\neq j$, and $n=k$. Also, if there are $m$ paths in $P=\{\sigma_1,\sigma_2,\cdots,\sigma_k\}$ of length $\ell$ where $0\leq m<k$ and there are $k-m$ paths of length $\ell-1$, then $\phi_k(P)=m!(k-m)!$.
\end{lemma}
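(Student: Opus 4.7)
My plan is to apply Lemma 3 in both directions, using the observation that the interlacing $\alpha$ must be a single path rather than a single cycle, since the $k$th power of a cycle is a product of cycles and the hypothesis concerns only paths. I would use the uniqueness of the disjoint path decomposition (implicit in Lemma 1) as the glue between the two directions.

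For the forward implication, I would suppose $\alpha = [a_1 a_2 \cdots a_N]$ is a $k$-interlacing and write $N = pk + q$ with $0 \leq q \leq k-1$. Lemma 3 then decomposes $\alpha^k$ into $q$ paths of length $p+1$ together with $k-q$ paths of length $p$, for a total of $k$ paths when $p \geq 1$. Comparing this with $\alpha^k = \sigma_1 \cdots \sigma_n$ and invoking uniqueness of the decomposition forces $n=k$ and every $\ell(\sigma_i) \in \{p, p+1\}$, so the lengths differ pairwise by at most one. I would then handle the degenerate possibility $p = 0$ separately: in that case Lemma 3 would yield $q$ paths of length $1$ together with $k - q$ nominal ``length-$0$'' components that carry no information, forcing every $\sigma_i$ to have length $1$. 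This is the exact situation the ``not all of length one'' hypothesis excludes, so $p \geq 1$ must hold.

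For the converse, I would invert the construction. Given $k$ paths with $m$ of length $\ell$ and $k-m$ of length $\ell-1$ (with $\ell \geq 2$, since not every path has length one), I would set $p = \ell - 1$ and $q = m$, giving a target length $N = (\ell-1)k + m$. Building $\alpha$ amounts to bijectively assigning the $m$ longer paths to the ``slots'' $1, \ldots, m$ and the $k-m$ shorter paths to the slots $m+1, \ldots, k$; the $(t+1)$th digit of the path occupying slot $i$ is placed in position $a_{i+tk}$. Lemma 3 then yields $\alpha^k = \sigma_1 \cdots \sigma_k$ automatically, so an interlacing exists.

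For the count, I would argue that this correspondence between slot-assignments and interlacings is a bijection: distinct assignments produce distinct digit sequences (and hence distinct paths $\alpha$, since a path's representation is determined by its digit sequence), while conversely every interlacing, read through Lemma 3, recovers the assignment by matching each $\sigma_i$ with the component of $\alpha^k$ it equals. The number of such assignments is $m!(k-m)!$, finishing the lemma. I expect the main obstacle to be not the core bijective argument but the careful treatment of the $p = 0$ boundary case, which is what justifies the ``not all of length one'' hypothesis and is easy to gloss over.
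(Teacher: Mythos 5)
Your proposal is correct and follows essentially the same route as the paper's proof: the forward direction via Lemma 3 and uniqueness of the path decomposition, the converse via the explicit interleaved construction, and the count via the bijection with assignments of paths to the first $k$ positions of $\alpha$. Your explicit handling of the degenerate $p=0$ case is a welcome bit of extra care that the paper's own proof leaves implicit, but it does not change the argument's structure.
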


\begin{proof}
Let the paths $\sigma_1,\sigma_2,\cdots,\sigma_n$ have a $k$-interlacing $\alpha$. By definition, $\alpha^k = \sigma_1\sigma_2\cdots\sigma_n$, and by Lemma 3, $\alpha^k$ is the product of $k$ disjoint paths of length varying by at most one; let these paths be denoted $\beta_1,\beta_2,\cdots,\beta_k$. Hence, $\alpha^k = \sigma_1\sigma_2\cdots\sigma_n = \beta_1\beta_2\cdots\beta_k$, and $\sigma_1,\sigma_2,\cdots,\sigma_n$ must be the same paths as $\beta_1,\beta_2,\cdots,\beta_k$. Therefore, $n=k$ and $\sigma_1,\sigma_2,\cdots,\sigma_n$ vary in length by at most one.

Now let $\sigma_i=[a_{i1}a_{i2}\cdots a_{i(\ell+1)}]$ if $i\leq c$ and $\sigma_i=[a_{i1}a_{i2}\cdots a_{1\ell}]$ if $c<i\leq k$. Then, $\alpha=[a_{11}a_{21}\cdots a_{k1}a_{12}a_{22}\cdots a_{k2}\cdots a_{1(\ell+1)}a_{2(\ell+1)}\cdots a_{c(\ell+1)}]$ is a $k$-interlacing of $\sigma_1,\cdots,\sigma_n$ from Lemma 3.

Let $P=\{\sigma_1,\cdots,\sigma_n\}$, where $P$ has $m$ paths of length $\ell$ and $n-m$ paths of length $\ell-1$. It will be shown that $\phi_k(P)=m!(n-m)!$. In order for a path $\alpha$ to be a $k$-interlacing of $\{\sigma_1,\cdots,\sigma_n\}$, $\alpha$ has to be of the form $\alpha=[a_1a_2\cdots a_{k(\ell-1)+m}]$ where $a_i$ are all digits of $\sigma_1,\cdots,\sigma_n$, without any digits repeated. By Lemma 3, $\alpha^k=[a_1a_{1+k}\cdots a_{1+k(\ell-1)}]\cdots$ $[a_m\cdots a_{m+k(\ell-1)}]$ $[a_{m+1}\cdots a_{m+k(\ell-2)}]\cdots$ $[a_k\cdots a_{k+k(\ell-2)}]$. So, $a_1,a_{1+k},\cdots,a_{1+k(\ell-1)}$ are digits of $\sigma_i$ for one $i$, and likewise for the digits of the other paths of $\alpha$. Hence, $a_1,a_2,\cdots,a_k$ are all from different paths, and if $a_i$ is a digit in a path, then $a_{i+k}$, if it exists, must be in the same path as $a_i$. It then follows that $a_1,\cdots,a_m$ have to be from the paths of length $\ell$ and $a_{m+1},\cdots,a_n$ from the paths of length $\ell-1$. Since the path containing $a_i$ where $i>k$ is determined from the path containing $a_{i-k}$, only the choice in ordering $a_1,\cdots,a_m$ and then $a_{m+1},\cdots,a_k$ matters. Therefore, $\phi_k(P)=m!(k-m)!$.

\end{proof}

\begin{lemma}
Let $\sigma_1,\sigma_2,\cdots,\sigma_n$ be disjoint paths of length one, and \sloppy{$P=\{\sigma_1,\cdots,\sigma_n\}$}. Then there exists a $k$-interlacing of these paths if and only if $n\leq k$. Also, $\phi_k(P)=n!$.
\end{lemma}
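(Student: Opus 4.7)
The plan is to follow the broad shape of Lemma 4's proof, but exploit the degenerate setting where every $\sigma_i$ has length one. First I would argue that any $k$-interlacing $\alpha$ must be a path rather than a cycle: raising a cycle to any positive power yields a product of cycles, never paths, so a cycle cannot power down to the purely path-valued product $\sigma_1\sigma_2\cdots\sigma_n$. By the definition of $k$-interlacing, the digits of $\alpha$ are precisely the $n$ distinct digits appearing in $\sigma_1,\ldots,\sigma_n$, and so $\alpha=[b_1b_2\cdots b_n]$ for some ordering of those digits.

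Next I would apply Lemma 3, writing $n = pk + q$ with $0 \le q < k$. For $\alpha^k$ to consist of singleton paths only, every factor produced must have length exactly one. If $p\geq 1$, this forces $p=1$ and $q=0$, i.e.\ $n=k$. The remaining case $p=0$ I would handle directly from the definition of path iteration: if $n<k$, then $i+k>n$ for every index $i$, so $\alpha^k(b_i)$ is undefined, meaning $\alpha^k$ is literally the product of the singleton paths $[b_1],\ldots,[b_n]$. Both cases yield $n\leq k$, giving necessity; conversely, whenever $n\leq k$, any ordering $[b_1\cdots b_n]$ of the digits of $\sigma_1,\ldots,\sigma_n$ powers to $\sigma_1\sigma_2\cdots\sigma_n$, giving existence.

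To compute $\phi_k(P)$, I would note that two paths $[b_1\cdots b_n]$ and $[b_1'\cdots b_n']$ agree as partial maps if and only if the sequences themselves agree, so each of the $n!$ orderings of the digits of $\sigma_1,\ldots,\sigma_n$ produces a distinct $k$-interlacing, and every $k$-interlacing arises this way. Hence $\phi_k(P)=n!$.

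The main obstacle is the edge case $p=0$: Lemma 3 is naturally phrased under the implicit assumption $p\geq 1$, so one must reprove the power formula by hand when $\alpha$ is a path with fewer than $k$ digits. Everything else is routine bookkeeping.
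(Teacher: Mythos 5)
Your proposal is correct and follows essentially the same route as the paper's proof: identify a $k$-interlacing as a path on exactly the $n$ digits, deduce $n\leq k$ from the structure of $\alpha^k$, and count the $n!$ orderings. You supply two details the paper leaves implicit (ruling out cycles as interlacings, and the Lemma~3 case analysis including the degenerate $p=0$ case), but the underlying argument is the same.
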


\begin{proof}
Suppose $\alpha$ is a $k$-interlacing of $\sigma_1,\sigma_2,\cdots,\sigma_n$. Then, \sloppy{$\ell(\alpha)=\sum_i\ell(\sigma_i)=n\leq k$}. Now suppose $\sigma_1=[a_1]$, $\sigma_2=[a_2],\cdots$, $\sigma_n=[a_n]$ where $n\leq k$, and let $P=\{\sigma_1,\cdots,\sigma_n\}$. Then, $[a_1a_2\cdots a_n]$ is a $k$-interlacing of $\sigma_1,\cdots,\sigma_n$. Now it will be shown that $\phi_k(P)=n!$. Any path $\alpha$ of the form $[a_1a_2\cdots a_n]$, where $a_1,a_2,\cdots,a_n$ are distinct digits of $\sigma_1,\cdots,\sigma_n$, is a $k$-interlacing of $\{\sigma_1,\cdots,\sigma_n\}$. Since there are $n!$ of these paths, $\phi_k(P)=n!$.
\end{proof}

\section{Enumerating $k$th Roots}

Interestingly, integer partitions serve a significant purpose in determining the number of $k$th roots an element of $\SIM(n)$ has. Hence, the notation for integer partitions used in the upcoming theorems will be explained below.

\begin{definition}
Let $\lambda: a_1+\cdots+a_t = n$ be an integer partition of $n$ where $a_1,\cdots,a_t$ all are positive integers. Then we shall write $\lambda\vdash n$. If $a$ is one of $a_1,\cdots,a_t$, then write $a\in\lambda$. Let $\freq(a,\lambda)$ denote the number of times $a$ appears in $a_1,\cdots,a_t$. Also, let $\max\lambda = \max_{1\leq i\leq t}a_i$, and $\ell(\lambda)$, the ``length'' of $\lambda$, equal $t$. Finally, let $\sum\lambda = \sum_i a_i = n$.
\end{definition}

\begin{proposition}
Let $\sigma$ have disjoint path decomposition of $\sigma_1\sigma_2\cdots\sigma_n$ where $\ell(\sigma_1)=\cdots=\ell(\sigma_n)>1$. Then $\sigma$ has a $k$th root if and only if $k|n$, and if $k|n$, then $\sigma$ has $n!/(n/k)!$ $k$th roots.
\end{proposition}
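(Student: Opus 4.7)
The plan is to invoke Corollary 1 together with Lemma 4 to reduce the problem to counting a specific type of set partition of $P = \{\sigma_1,\ldots,\sigma_n\}$. By Corollary 1,
\[
r_k(\sigma) = \sum_{\{P_1,\ldots,P_m\} \text{ partitions } P} \prod_{i=1}^m \phi_k(P_i).
\]
Since every path $\sigma_j$ has the same length $\ell > 1$, every nonempty $P_i \subseteq P$ consists of equal-length paths not all of length one, so Lemma 4 applies. Lemma 4 forces the number of paths in $P_i$ to equal $k$ (otherwise $\phi_k(P_i) = 0$). Thus only set partitions of $P$ into blocks of size exactly $k$ contribute.

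From this the first assertion is immediate: such partitions exist iff $k \mid n$, so $\sigma$ has a $k$th root iff $k \mid n$.

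Now suppose $k \mid n$ and set $m = n/k$. For each contributing partition, each block $P_i$ has $k$ paths all of length $\ell$. Applying Lemma 4 (taking, in its notation, either all $k$ paths to have length $\ell$ with $k-m=0$ paths of length $\ell-1$, or equivalently $0$ paths of length $\ell+1$ and $k$ of length $\ell$), we get $\phi_k(P_i) = k!\cdot 0! = k!$. Hence every contributing partition contributes $(k!)^m$ to the sum.

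The final step is a standard count: the number of set partitions of an $n$-element set into $m$ unordered blocks of size exactly $k$ is $\frac{n!}{(k!)^m\, m!}$. Multiplying by the per-partition weight $(k!)^m$ gives
\[
r_k(\sigma) = \frac{n!}{(k!)^m\, m!} \cdot (k!)^m = \frac{n!}{(n/k)!},
\]
as claimed. No step is particularly difficult; the main thing to be careful about is confirming that Lemma 4 still gives $\phi_k(P_i) = k!$ in the ``uniform length'' case that lies at the boundary of its stated hypothesis $0 \le m < k$, which is handled by noting that one may take $m = 0$ paths of length $\ell+1$ and $k$ paths of length $\ell$.
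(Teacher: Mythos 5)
Your proposal is correct and follows essentially the same route as the paper: reduce via Corollary 1 and Lemma 4 to set partitions of $P$ into blocks of size $k$, note each block contributes $k!$, and multiply by the standard count $n!/((k!)^{n/k}(n/k)!)$ of such partitions. Your extra remark about reading the uniform-length case of Lemma 4 as $m=0$ paths of the longer length is a careful touch the paper glosses over, but the argument is otherwise identical.
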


\begin{proof}
Let $P = \{\sigma_1,\cdots,\sigma_n\}$. From Corollary 1, $\sigma$ has $\sum\prod_{i=1}^m\phi_k(P_i)$ $k$th roots, with the sum taken over all set partitions $\{P_1,\cdots,P_m\}$ of $P$. By Lemma 4, $\phi_k(P_i)\neq 0$ if and only if the cardinality of $P_i$, $|P_i|$, is $k$. Also, if $|P_i|=k$, then $\phi(P_i)=k!$. Hence, for any set partition $\{P_1,\cdots,P_m\}$ of $P$ such that $\prod_{i=1}^m\phi_k(P_i)\neq 0$, $|P_i|=k$ for each $i$; $\prod_{i=1}^m\phi_k(P_i) = \prod_{i=1}^{n/k}k! = (k!)^{n/k}$. Since there are

\[\frac{1}{(n/k)!}\binom nk\binom{n-k}{k}\cdots\binom kk = \frac{n!}{(k!)^{n/k}(n/k)!} \]

set partitions such that $\prod_{i=i}^m\phi_k(P_i)\neq 0$, $\sigma$ has $\frac{n!}{(k!)^{n/k}(n/k)!}(k!)^{n/k} = \frac{n!}{(n/k)!}$ $k$th roots.
\end{proof}

\begin{proposition}
Let $\sigma$ have disjoint path decomposition of $\sigma_1\sigma_2\cdots\sigma_n$ where $\ell(\sigma_1)=\cdots=\ell(\sigma_n)=1$. Then

\[ r_k(\sigma) = \sum_\lambda\frac{n!}{\prod_{a\in\lambda}\freq(a,\lambda)!} \]

\noindent where the sum is taken over all integer partitions $\lambda\vdash n$ such that $\max\lambda\leq k$.
\end{proposition}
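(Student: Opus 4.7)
The plan is to combine Corollary 1 with Lemma 5 and then reorganize the resulting sum over set partitions according to block-size type.

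First, I would invoke Corollary 1 to write
\[
r_k(\sigma) = \sum_{\{P_1,\dots,P_m\}\vdash P}\prod_{i=1}^m \phi_k(P_i),
\]
where the outer sum is over set partitions of $P=\{\sigma_1,\dots,\sigma_n\}$. Because every $\sigma_i$ is a path of length one, Lemma 5 immediately gives $\phi_k(P_i) = |P_i|!$ whenever $|P_i|\leq k$ and $\phi_k(P_i) = 0$ otherwise. Thus only those set partitions whose blocks all have size at most $k$ contribute.

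Next, I would sort the surviving set partitions by the multiset of block sizes, which is exactly an integer partition $\lambda\vdash n$. The constraint that every block have size at most $k$ translates cleanly to $\max\lambda\leq k$. For a fixed $\lambda\vdash n$, writing $m_j=\freq(j,\lambda)$, the number of set partitions of the $n$-element set $P$ whose block-size multiset is $\lambda$ is the standard multinomial count
\[
\frac{n!}{\prod_{j}(j!)^{m_j}\,m_j!},
\]
and for each such set partition the product $\prod_i |P_i|! = \prod_j (j!)^{m_j}$ depends only on $\lambda$. Multiplying these two expressions gives a contribution of $n!/\prod_j m_j! = n!/\prod_{a\in\lambda}\freq(a,\lambda)!$ from each $\lambda$ with $\max\lambda\leq k$, matching the claimed formula after summing over such $\lambda$.

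The work is essentially bookkeeping once Lemma 5 is applied, so there is no serious obstacle; the only place requiring care is making sure the $(j!)^{m_j}$ factor from counting set partitions cancels exactly against the $\prod_i |P_i|! = \prod_j (j!)^{m_j}$ contributed by Lemma 5, leaving only the $\prod_j m_j!$ in the denominator. I would also briefly note that the convention $\prod_{a\in\lambda}$ ranges over the distinct parts of $\lambda$, so that each $\freq(a,\lambda)!$ appears exactly once in the denominator.
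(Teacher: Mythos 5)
Your proposal is correct and follows essentially the same route as the paper: apply Corollary 1, use Lemma 5 to restrict to set partitions with all blocks of size at most $k$, group these by the integer partition of block sizes, and observe that the $\prod_i |P_i|!$ factor cancels the corresponding factor in the set-partition count, leaving $n!/\prod_{a\in\lambda}\freq(a,\lambda)!$. The only cosmetic difference is that you write the count of set partitions of a given type directly as $n!/\bigl(\prod_j (j!)^{m_j} m_j!\bigr)$ rather than via a telescoping product of binomial coefficients as the paper does.
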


\begin{proof}
Let $P = \{\sigma_1,\cdots,\sigma_n\}$. Then by Corollary 1, $r_k(\sigma) = \sum\prod_{i=1}^m\phi_k(P_i)$ where the sum is taken over all set partitions $\{P_1,\cdots,P_m\}$ of $P$. By Lemma 5, a set partition $\{P_1,\cdots,P_m\}$ of $P$ satisfies $\prod_{i=i}^m\phi_k(P_i)\neq 0$ if and only if $|P_i|\leq k$. Suppose $\{P_1,\cdots,P_m\}$ and $\{Q_1,\cdots,Q_m\}$ are set partitions of $P$. It will be shown that if the integer partitions $\lambda_P: |P_1|+\cdots+|P_m| = n$ and $\lambda_Q: |Q_1|+\cdots+|Q_m| = n$ are the same, then $\prod_{i=i}^m\phi_k(P_i)=\prod_{i=i}^m\phi_k(Q_i)$. Suppose $|P_1|+\cdots+|P_m| = n$ and $|Q_1|+\cdots+|Q_m| = n$ are the same partitions. Then, 

\[ \prod_{i=i}^m\phi_k(P_i) = \prod_{i=1}^m|P_i|! = \prod_{i=1}^m|Q_i|! = \prod_{i=1}^m\phi_k(Q_i) \]

\noindent since $|P_1|,\cdots,|P_m|$ is simply $|Q_1|,\cdots,|Q_m|$ rearranged. Also, every integer partition $a_1+\cdots+a_m = n$ such that $a_i\leq k$ for each $i$ can represent set partitions $\{P_1,\cdots,P_m\}$ such that $|P_i|=a_i$ for each $i$. Given an integer partition $a_1+\cdots+a_m = n$, there are

\[ \frac{1}{\prod_{a\in\lambda}\freq(a,\lambda)!}\binom{n}{a_1}\binom{n-a_1}{a_2}\binom{n-a_1-a_2}{a_3}\cdots
\binom{a_m}{a_m} \]
\[ =\frac{n!}{a_1!a_2!\cdots a_m!\prod_{a\in\lambda}\freq(a,\lambda)!} \]

\noindent set paritions $\{P_1,\cdots,P_m\}$ of $P$ such that $|P_i|=a_i$ for all $i$. Since \sloppy{$\prod_{i=i}^m\phi_k(P_i) = \prod_{i=1}^m|P_i|! = a_1!\cdots a_m!$},

\begin{align*}
r_k(\sigma) &= \sum_{\lambda\vdash n, \max\lambda\leq k}\frac{n!}{a_1!\cdots a_m!\prod_{a\in\lambda}\freq(a,\lambda)!}a_1!\cdots a_m! \\
 &= \sum_{\lambda\vdash n, \max\lambda\leq k}\frac{n!}{\prod_{a\in\lambda}\freq(a,\lambda)!}
\end{align*}
\end{proof}

\begin{proposition}
Let $\sigma_{11}\cdots\sigma_{1n_1}\sigma_{21}\cdots\sigma_{2n_2}$ be the disjoint path decomposition of $\sigma$ where $\ell(\sigma_{11})=\cdots=\ell(\sigma_{1n_1})=\ell(\sigma_{21})+1=\cdots=\ell(\sigma_{2n_2})+1>2$ and let $n_1+n_2=n$. Then

\[ r_k(\sigma) = \sum_\lambda\frac{n_1!n_2!}{(n/k-\ell(\lambda))!\prod_{a\in\lambda}\freq(a,\lambda)!} \]

\noindent where the sum is taken over all integer partitions $\lambda\vdash n_1$ such that $\max\lambda\leq k$ and $\ell(\lambda)\leq n/k$.
\end{proposition}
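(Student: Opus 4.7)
The plan is to apply Corollary 1 and classify the set partitions of $P = \{\sigma_{11},\ldots,\sigma_{1n_1},\sigma_{21},\ldots,\sigma_{2n_2}\}$ by how many long paths each block contains. Since every path has length $\ell$ or $\ell-1$ with $\ell>2$, none is a length-one path, so Lemma 4 applies directly: for a block $P_i$ with $\phi_k(P_i)\neq 0$ we must have $|P_i|=k$, and if $P_i$ contains $m_i$ long paths and $k-m_i$ short paths then $\phi_k(P_i)=m_i!(k-m_i)!$. This already forces $k\mid n$ (otherwise the formula is vacuously $0$); set $t=n/k$, so every nonzero contributing partition has exactly $t$ blocks.

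Next I would match set partitions to integer partitions $\lambda\vdash n_1$. Given a valid set partition, list the nonzero values among $m_1,\ldots,m_t$ in weakly decreasing order: this yields a $\lambda\vdash n_1$ with $\max\lambda\leq k$ (since each $m_i\leq k$) and $\ell(\lambda)\leq t$ (since only the nonzero $m_i$'s appear). The remaining $t-\ell(\lambda)=n/k-\ell(\lambda)$ blocks are the ``all-short'' blocks. Writing $c_a=\freq(a,\lambda)$ for $a\geq 1$ and $c_0=n/k-\ell(\lambda)$, I would then count set partitions of type $\{c_a\}$.

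The key computation is to count ordered tuples $(P_1,\ldots,P_t)$ of blocks with a specified type sequence $(m_1,\ldots,m_t)$ via the two multinomials
\[
\frac{n_1!}{\prod_i m_i!}\cdot\frac{n_2!}{\prod_i(k-m_i)!},
\]
then sum over the $t!/\prod_a c_a!$ orderings of the multiset $\{c_a\}$, and finally divide by $t!$ to pass to set partitions. This yields
\[
\frac{n_1!\,n_2!}{\prod_a c_a!\cdot\prod_a (a!)^{c_a}((k-a)!)^{c_a}}
\]
set partitions of type $\{c_a\}$. Multiplying by $\prod_i \phi_k(P_i)=\prod_a (a!(k-a)!)^{c_a}$ cancels the inner products, leaving $\frac{n_1!\,n_2!}{\prod_a c_a!}$. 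Rewriting $\prod_a c_a!=(n/k-\ell(\lambda))!\prod_{a\in\lambda}\freq(a,\lambda)!$ and summing over admissible $\lambda$ gives the stated formula.

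The main obstacle is the bookkeeping in the unordering step: one must remember that the blocks with no long paths form a separate ``type'' (corresponding to $a=0$) which is not recorded in $\lambda$, so the factor $(n/k-\ell(\lambda))!$ in the denominator arises from $c_0!$ rather than from $\lambda$ itself. Once that is isolated, the remaining manipulations parallel the derivation in Proposition 2.
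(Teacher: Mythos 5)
Your proposal is correct and follows essentially the same route as the paper's proof: apply Corollary 1 and Lemma 4, classify the size-$k$ blocks by their number of long paths, identify each set partition with an integer partition $\lambda\vdash n_1$, and count via multinomials so that the $\prod_i\phi_k(P_i)=\prod_i m_i!(k-m_i)!$ factors cancel. If anything, your explicit treatment of the all-short blocks as a separate type $a=0$ with $c_0=n/k-\ell(\lambda)$ (contributing $(k!)^{c_0}$ to both the partition count and the product of $\phi_k$'s) is tidier than the paper's bookkeeping at the corresponding step.
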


\begin{proof}
By Corollary 1, $r_k(\sigma) = \sum_{\cup\{P_1,\cdots,P_m\}=P}\prod_{i=1}^m\phi_k(P_i)$, where \sloppy{$P = \{\sigma_1,\cdots,\sigma_n\}$}. By Lemma 4, a set partition $\{P_1,\cdots,P_m\}$ of $P$ has to satisfy $|P_i|=k$ for all $i$ in order for $\prod_{i=1}^m\phi_k(P_i)\neq 0$. Hence, $k|n$. Define the function $N(P_i)$ by $N(P_i)=|\{\alpha\in P_i|\ell(\alpha)=\ell(\sigma_1)\}|$. Suppose $\{P_1,\cdots,P_{n/k}\}$ and $\{Q_1,\cdots,Q_{n/k}\}$ are set partitions of $P$. It will be shown that if the integer partitions $\lambda_P: N(P_1)+\cdots+N(P_{n/k}) = n_1$ and $\lambda_Q: N(Q_1)+\cdots+N(Q_{n/k}) = n_1$ are the same, then $\prod_{i=1}^{n/k}\phi_k(P_i)=\prod_{i=1}^{n/k}\phi_k(Q_i)$. Suppose $\lambda_P=\lambda_Q$. Then,

\[ \prod_{i=1}^{n/k}\phi_k(P_i)=\prod_{i=1}^{n/k}N(P_i)!(k-N(P_i))!=\prod_{i=1}^{n/k}N(Q_i)!(k-N(Q_i))!=\prod_{i=1}^{n/k}\phi_k(Q_i) \]

\noindent since $\lambda_P$ is $\lambda_Q$ rearranged. Suppose $\lambda: a_1+\cdots+a_m = n_1$ where $a_i\leq k$ and $m\leq n/k$. Then $\lambda$ represents

\[ \frac{1}{(n/k-m)!\prod_{a\in\lambda}\freq(a,\lambda)!}\binom{n_1}{a_1}\binom{n_2}{k-a_1}\binom{n_1-a_1}{a_2}\binom{n_2-(k-a_1)}{k-a_2} \]
\[ \cdots\binom{a_m}{a_m}\binom{k-a_m}{k-a_m} \]
\[ = \frac{n_1!}{a_1!\cdots a_m!}\frac{n_2!}{(k-a_1)!\cdots(k-a_m)!}\frac{1}{(n/k-m)!\prod_{a\in\lambda}\freq(a,\lambda)!} \]

\noindent set partitions $\{P_1,\cdots,P_{n/k}\}$ of $P$ such that $N(P_1)+\cdots+N(P_{n/k}) = n_1$ is the same partition as $\lambda$. Since $\prod_{i=1}^m\phi_k(P_i)=\prod_{i=1}^m a_i!(k-a_i)!$, we have

\begin{align*}
r_k(\sigma) &= \sum_\lambda\frac{n_1!n_2!}{\prod_{i=1}^m a_i!(k-a_i)!}\frac{\prod_{i=1}^m a_i!(k-a_i)!}{(n/k-\ell(\lambda))!\prod_{a\in\lambda}\freq(a,\lambda)!} \\
&= \sum_\lambda\frac{n_1!n_2!}{(n/k-\ell(\lambda))!\prod_{a\in\lambda}\freq(a,\lambda)!}
\end{align*}
\end{proof}

\begin{definition}
The paths $\sigma_1,\cdots,\sigma_n$ are of weakly varying length if there exists a suitable reordering $\alpha_1,\cdots,\alpha_n$ such that $\ell(\alpha_1)\leq\ell(\alpha_2)\leq\cdots\leq\ell(\alpha_n)$ and for each $i$, $|\ell(\alpha_{i+1}) - \ell(\alpha_i)|\leq 1$.
\end{definition}

\begin{theorem}
Let $\sigma = \sigma_{11}\cdots\sigma_{1n_1}\cdots\sigma_{m1}\cdots\sigma_{mn_m}$ where $\sigma_{ij}$ are disjoint paths  such that $\ell(\sigma_{11})=\cdots=\ell(\sigma_{1n_1})=\ell(\sigma_{21})+1=\cdots=\ell(\sigma_{2n_2})+1=\cdots=\ell(\sigma_{m1})+m-1=\cdots=\ell(\sigma_{mn_m})+m-1$, $m\geq 2$, and $\ell(\sigma_{ij})\geq 2$ for all $i,j$. Also let $n = n_1+\cdots+n_m$. Then
\[ r_k(\sigma) = \sum_{\lambda_1,\cdots,\lambda_{m-1}}\frac{n_1!\cdots n_m!}{(n/k-\sum_{i=1}^{n-1}\ell(\lambda_i))!\prod_{i=1}^{m-1}\prod_{a\in\lambda_i}\freq(a,\lambda_i)!} \]
where the sum is taken over integer partitions $\lambda_1,\cdots,\lambda_{m-1}$ such that $\lambda_1\vdash n_1$, $\lambda_2\vdash n_1+n_2-k\ell(\lambda_1)$, $\cdots$, $\lambda_{m-1}\vdash\sum_{i=1}^{m-1}n_i - k\sum_{i=1}^{m-2}\ell(\lambda_i)$, $\max\lambda_i\leq k$, and $\ell(\lambda_i)\leq(n_{i+1}+\sum\lambda_i)/k$ for all $i$. It is admissable for some to be partitions of zero.
\end{theorem}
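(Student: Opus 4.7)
The plan is to generalize the arguments used in Propositions 1--3 to arbitrarily many length classes. Start from Corollary 1, which gives $r_k(\sigma) = \sum \prod_j \phi_k(P_j)$ with the sum over set partitions of $P = \{\sigma_{ij}\}$. Since all $\ell(\sigma_{ij}) \geq 2$, only Lemma 4 is relevant, and it forces every contributing block $P_j$ to have $|P_j| = k$ and to consist of paths whose lengths differ by at most one. Because the lengths of the $\sigma_{ij}$ are the $m$ consecutive values $\ell(\sigma_{11}), \ell(\sigma_{11}) - 1, \ldots, \ell(\sigma_{11}) - m + 1$, each block either lies entirely inside a single length class $\{\sigma_{i1}, \ldots, \sigma_{in_i}\}$ (a \emph{pure} block of type $i$) or straddles two consecutive classes $i$ and $i+1$ (a \emph{mixed} block at level $i$). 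In particular the total number of blocks must equal $n/k$.

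To enumerate, I would parameterize each contributing set partition by a tuple $(\lambda_1, \ldots, \lambda_{m-1})$ as follows: for every block whose \emph{longer}-length class is class $i$, record the number of paths it draws from class $i$; the multiset of these counts over all such blocks is the partition $\lambda_i$. A pure block of type $i$ with $i \leq m-1$ therefore contributes a part equal to $k$ to $\lambda_i$, a mixed block at level $i$ contributes its class-$i$ count (strictly between $0$ and $k$), and pure blocks of type $m$ are deliberately unrecorded. Since every path of class $i$ is consumed either by a level-$(i-1)$ block (which takes $k\ell(\lambda_{i-1}) - \sum\lambda_{i-1}$ of them) or by a level-$i$ block (which accounts for $\sum\lambda_i$), one obtains $\lambda_1 \vdash n_1$ and in general $\lambda_i \vdash \sum_{j=1}^i n_j - k\sum_{j=1}^{i-1}\ell(\lambda_j)$, matching the indexing set of the theorem. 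The constraint $\max\lambda_i \leq k$ guarantees each part is a legal block size, and $\ell(\lambda_i) \leq (n_{i+1} + \sum\lambda_i)/k$ ensures class $i+1$ has enough remaining paths to fill the level-$i$ blocks. The number of residual pure blocks of type $m$ is $n/k - \sum_{i=1}^{m-1}\ell(\lambda_i)$.

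For fixed $\lambda_1, \ldots, \lambda_{m-1}$, I would count (a) the number of compatible set partitions and (b) the product $\prod_j \phi_k(P_j)$. Distributing paths class by class (first to level-$(i-1)$ blocks, then to level-$i$ blocks), class $i$ contributes a multinomial factor of $n_i!$ divided by $\prod_j (k - a_{i-1,j})! \cdot \prod_j a_{i,j}!$, together with $\prod_{a \in \lambda_i} \freq(a, \lambda_i)!$ in the denominator to quotient out equal-sized indistinguishable blocks at level $i$. Class $m$ additionally carries $(k!)^{s_m} s_m!$ in the denominator, where $s_m = n/k - \sum_{i=1}^{m-1}\ell(\lambda_i)$ is the number of residual pure blocks. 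By Lemma 4, each level-$i$ block with class-$i$ count $a$ contributes $\phi_k = a!(k-a)!$, and each residual pure block contributes $\phi_k = k!$. Multiplying everything, the products $\prod_{i,j} a_{i,j}!(k-a_{i,j})!$ cancel exactly against the matching factorials in the multinomial denominators, and $(k!)^{s_m}$ cancels against its counterpart, leaving
\[
\frac{n_1! \cdots n_m!}{\left(n/k - \sum_{i=1}^{m-1}\ell(\lambda_i)\right)! \prod_{i=1}^{m-1}\prod_{a\in\lambda_i}\freq(a,\lambda_i)!}.
\]
Summing over admissible tuples gives the claim.

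The main obstacle is keeping the bookkeeping straight: each pure block inside class $i<m$ must be recorded in $\lambda_i$ as a part equal to $k$ (not in $\lambda_{i-1}$), residual pure blocks in class $m$ must be accounted for only through the $(n/k - \sum\ell(\lambda_i))!$ factor, and every path must be assigned to exactly one block. Once this convention is in place the cancellation pattern directly generalizes that of Propositions 2 and 3 and is essentially mechanical; the recursion $\lambda_i \vdash \sum_{j\leq i}n_j - k\sum_{j<i}\ell(\lambda_j)$ and the bound on $\ell(\lambda_i)$ emerge naturally as the feasibility conditions for this path-assignment procedure.
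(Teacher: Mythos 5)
Your argument is correct, but it is organized differently from the paper's. The paper proves this theorem by induction on the number $m$ of length classes: the base case is Proposition 3, and the inductive step groups the set partitions of $P$ according to the sub-collection $Q$ of blocks meeting the top length class, factors the sum from Corollary 1 as $\sum_Q\bigl(\prod_i\phi_k(Q_i)\sum_{\{P'_j\}}\prod_j\phi_k(P'_j)\bigr)$, recognizes the inner sum as $r_k$ of an element with one fewer length class (so the inductive hypothesis applies), and then counts the admissible $Q$ for each partition $\lambda_1\vdash n_1$ exactly as in Proposition 3. You instead unroll this entirely: you parameterize every contributing set partition at once by the tuple $(\lambda_1,\dots,\lambda_{m-1})$, recording for each block the number of paths drawn from its longer length class, and carry out a single global multinomial count with one global cancellation. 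The combinatorial content (Corollary 1, Lemma 4 forcing blocks of size $k$ spanning at most two adjacent classes, the symmetry factors $\prod_a\freq(a,\lambda_i)!$, and the recursion $\sum\lambda_i=\sum_{j\le i}n_j-k\sum_{j<i}\ell(\lambda_j)$) is the same, so the two proofs are equivalent in substance; what your version buys is a transparent one-shot derivation in which the source of every factor in the final formula is visible, and in particular you track the residual pure blocks of the shortest class explicitly, writing the $(k!)^{s_m}s_m!$ in the denominator of the partition count and cancelling $(k!)^{s_m}$ against the $\phi_k$ contributions of those blocks --- a step the paper's Proposition 3 elides by silently dropping $(k!)^{s}$ from both the count and the product. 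What the paper's induction buys is that it only ever has to analyze one interface between two adjacent length classes at a time, reusing Proposition 3 verbatim, at the cost of some bookkeeping in matching the inner sum to the inductive hypothesis. One shared caveat, inherited from the theorem statement rather than introduced by you: tuples for which $n/k-\sum_{i=1}^{m-1}\ell(\lambda_i)$ fails to be a nonnegative integer correspond to no set partitions and should be understood as contributing zero (equivalently, excluded from the index set), which is worth saying explicitly in your write-up.
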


\begin{proof}
This will be shown via induction. By Proposition 3, the above formula holds when $m=2$. Now suppose the formula holds for an $m>2$, and $\alpha=\alpha_{11}\cdots\alpha_{1n_1}\cdots\alpha_{m+1,1}\cdots\alpha_{m+1,n_{m+1}}$ where $\alpha_{ij}$ are disjoint paths such that $\ell(\alpha_{ij})\geq 2$ for all $i,j$ and $\ell(\alpha_{11})=\cdots=\ell(\alpha_{1n_1})=\ell(\alpha_{21})+1=\cdots=\ell(\alpha_{2n_2})+1=\cdots=\ell(\alpha_{m+1,1})+m=\cdots=\ell(\alpha_{m+1,n_{m+1}})+m$. Let $P=\{\alpha_{11},\cdots,\alpha_{m+1,n_{m+1}}\}$. From Corollary 1,
	\[ r_k(\alpha) = \sum_{\cup\{P_1,\cdots,P_r\}=P}\prod_{i=1}^r\phi_k(P_i), \]
which may be rewritten as
	\[ r_k(\alpha) = \sum_Q\sum_{\{P'_1,\cdots,P'_r\}}\prod_{i=1}^{|Q|}\phi_k(Q_i)\prod_{i=1}^r\phi_k(P'_i) \]
where each $Q$ is a collection of disjoint subsets of $P$ such that each set in $Q$ contains at least one element of $\{\alpha_{11},\cdots,\alpha_{1n_1}\}$ and $\{\alpha_{11},\cdots,\alpha_{1n_1}\}$ is contained in $\cup Q$, and $\{P'_1,\cdots,P'_r\}$ is a set partition of $P\backslash\cup Q$. Then,
	\begin{equation}
	r_k(\alpha) = \sum_Q\left(\prod_{i=1}^{|Q|}\phi_k(Q_i)\sum_{\{P'_1,\cdots,P'_r\}}\prod_{i=1}^r\phi_k(P'_i)\right)
	\end{equation}
since $\prod_{i=1}^{|Q|}\phi_k(Q_i)$ is independent of $\{P'_1,\cdots,P'_r\}$. In order to exclude terms of the above sum that are zero, take the sets of $Q$ to only have elements from $\{\alpha_{11},\cdots,\alpha_{1n_1},\alpha_{21},\cdots,\alpha_{2n_2}\}$. From Corollary 1, $\sum_{\{P'_1,\cdots,P'_r\}}\prod_{i=1}^r\phi_k(P'_i)$ is the same as the number of $k$th roots of an element that is the product of paths of $m$ different, weakly varying lengths greater than one; the multiplicities of these lengths in descending order of length are $n_1+n_2-k|Q|$, $n_3$, $\cdots$, $n_{m+1}$. By the inductive hypothesis,
	\begin{flalign*}
	&\sum_{\{P'_1,\cdots,P'_r\}}\prod_{i=1}^r\phi_k(P'_i)&
	\end{flalign*}
	\[ = \sum_{\lambda_2,\cdots,\lambda_m}\frac{(n_1+n_2-k|Q|)!n_3!\cdots n_{m+1}!}{(\frac{n-k|Q|}{k}-\sum_{i=2}^m\ell(\lambda_i))!\prod_{i=2}^m\prod_{a\in\lambda_i}\freq(a,\lambda_i)!} \]
	
\noindent where $\lambda_2\vdash n_1+n_2-k|Q|$, $\lambda_3\vdash n_1+n_2+n_3-k|Q|-k\ell(\lambda_2)$, $\cdots$, $\lambda_m\vdash\sum_{i=1}^m n_i - k|Q| - k\sum_{i=2}^{m-1}\ell(\lambda_i)$, $\max\lambda_i\leq k$ for all $i$, and $\ell(\lambda_i)\leq(n_{i+1}+\sum\lambda_i)/k$ for all $i$.

Suppose $\{Q^{(1)}_1,\cdots,Q^{(1)}_s\}$ and $\{Q^{(2)}_1,\cdots,Q^{(2)}_{s'}\}$ are valid $Q$ that can be taken over the first sum in (1), and define the function $N(Q_i)=|\{\alpha_{1i}|\alpha_{1i}\in Q_i\}|$. Then the inside of the sum in (1) for $\{Q^{(1)}_1,\cdots,Q^{(1)}_s\}$ and for \sloppy{$\{Q^{(2)}_1,\cdots,Q^{(2)}_{s'}\}$} are equal whenever $\lambda^{(1)}:N(Q^{(1)}_1)+\cdots+N(Q^{(1)}_s)=n_1$ and $\lambda^{(2)}:N(Q^{(2)}_1)+\cdots+N(Q^{(2)}_{s'})=n_1$ are the same integer partitions. Hence, for each valid $Q=\{Q_1,\cdots,Q_s\}$, the interior of the sum only depends on the associated integer partition $\lambda_Q: N(Q_1)+\cdots+N(Q_s)=n_1$. So given a partition $\lambda_1: a_1+\cdots+a_t=n_1$ with $\max\lambda_1\leq k$ and $\ell(\lambda_1)\leq(n_1+n_2)/2$, there are
	\[ \frac{1}{\prod_{a\in\lambda_1}\freq(a,\lambda_1)!}\binom{n_1}{a_1}\binom{n_2}{k-a_1}\binom{n_1-a_1}{a_2}\binom{n_2-(k-a_1)}{k-a_2} \]
	\[ \cdots\binom{a_t}{a_t}\binom{n_2-\sum_{i=1}^{t-1}(k-a_i)}{k-a_t} \]
	\[ = \frac{n_1!n_2!}{\prod_{a\in\lambda_1}\freq(a,\lambda_1)!a_1!\cdots a_t!(k-a_1)!\cdots(k-a_t)!(n_1+n_2-tk)!}\]
admissable $Q$ such that $\lambda_Q: N(Q_1)+\cdots+N(Q_s)=n_1$ and $\lambda_1: a_1+\cdots+a_t=n_1$ are the same partitions. So,
	\[ r_k(\alpha) = \sum_{\lambda_1}\left(\frac{\prod_{i=1}^t a_i!(k-a_i)!}{\prod_{a\in\lambda}\freq(a,\lambda_1)!}\frac{n_1!n_2!}{(n_1+n_2-k\ell(\lambda_1))!\prod_{i=1}^t a_i!(k-a_i)!}\right. \]
	\[ \left.\sum_{\lambda_2,\cdots,\lambda_m}\frac{(n_1+n_2-k\ell(\lambda_1))!n_3!\cdots n_{m+1}!}{(n/k-t-\sum_{i=2}^m\ell(\lambda_i))!\prod_{i=2}^m\prod_{a\in\lambda_i}\freq(a,\lambda_i)!}\right) \]
where $\lambda_1\vdash n_1$, $\lambda_2\vdash n_1+n_2-k\ell(\lambda_1)$, $\lambda_3\vdash n_1+n_2+n_3-k\ell(\lambda_1)-k\ell(\lambda_2)$, $\cdots$, $\lambda_m\vdash\sum_{i=1}^m n_i - k\sum_{i=1}^{m-1}\ell(\lambda_i)$, $\max\lambda_i\leq k$, and $\ell(\lambda_i)\leq (n_{i+1}+\sum\lambda_i)/k$ for all $i$. This simplifies to
	\[ r_k(\alpha) = \sum_{\lambda_1,\cdots,\lambda_{m}}\frac{n_1!n_2!n_3!\cdots n_{m+1}!}{(n/k-\sum_{i=1}^m\ell(\lambda_i))!\prod_{i=1}^m\prod_{a\in\lambda_i}\freq(a,\lambda_i)!}, \]
which completes the inductive step.
\end{proof}

Given the formula for $r_k(\sigma)$ where $\sigma$ is the product of disjoint paths of weakly varying length all greater than one, it is left to derive the formula for elements that are the product of disjoint paths of weakly increasing length starting with one.

\begin{theorem}
Let $\sigma = \sigma_{11}\cdots\sigma_{1n_1}\cdots\sigma_{m1}\cdots\sigma_{mn_m}$ where $\sigma_{ij}$ are disjoint paths  such that $\ell(\sigma_{11})=\cdots=\ell(\sigma_{1n_1})=\ell(\sigma_{21})+1=\cdots=\ell(\sigma_{2n_2})+1=\cdots=\ell(\sigma_{m1})+m-1=\cdots=\ell(\sigma_{mn_m})+m-1=m$, and $m\geq 2$. Also let $n = n_1+\cdots+n_m$. Then
	\[ r_k(\sigma) = \sum_{\lambda_1,\cdots,\lambda_m}\frac{n_1!\cdots n_m!}{\prod_{i=1}^{m}\prod_{a\in\lambda_i}\freq(a,\lambda_i)!} \]
where the sum is taken over integer partitions $\lambda_1,\cdots,\lambda_{m}$ such that $\lambda_1\vdash n_1$, $\lambda_2\vdash n_1+n_2-k\ell(\lambda_1)$, $\cdots$, $\lambda_{m}\vdash\sum_{i=1}^{m}n_i - k\sum_{i=1}^{m-1}\ell(\lambda_i)$, $\max\lambda_i\leq k$, and $\ell(\lambda_i)\leq(n_{i+1}+\sum\lambda_i)/k$ for all $i$. It is admissable for some to be partitions of zero.
\end{theorem}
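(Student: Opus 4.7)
The plan is to mirror the inductive argument used in the proof of Theorem 3, with induction on $m$. The key conceptual difference from Theorem 3 is that here the bottom layer consists of length-one paths, for which Lemma 5 allows interlacings of any size up to $k$, whereas Lemma 4 forces longer paths into groups of exactly $k$. This is exactly what replaces the $(n/k - \sum_i \ell(\lambda_i))!$ factor of Theorem 3 with the additional partition $\lambda_m$ in the sum, supplied by Proposition 2.

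For the base case I will take $m = 2$: $n_1$ paths of length two and $n_2$ paths of length one. Following the template of Theorem 3's proof, I apply Corollary 1 to split the sum over set partitions of $P$ into a choice of $Q = \{Q_1, \ldots, Q_s\}$ (disjoint size-$k$ subsets of $P$, each containing at least one length-two path, together covering all length-two paths) plus a set partition of the leftover, which now consists entirely of length-one paths. I parametrize $Q$ by the integer partition $\lambda_1 : N(Q_1) + \cdots + N(Q_s) = n_1$ and count the $Q$'s of a given shape using the multinomial identity already worked out in Proposition 3's proof. Proposition 2 then enumerates the $k$th roots of the length-one leftover; the $\prod a_i!(k-a_i)!$ factors cancel against $\prod_i \phi_k(Q_i)$, and a $(n_1 + n_2 - k\ell(\lambda_1))!$ numerator from Proposition 2 cancels its denominator counterpart in the count of $Q$'s, leaving the claimed formula for $m = 2$.

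For the inductive step, assume the formula with $m$ levels and consider $\sigma$ with $m+1$ levels. I peel off the top layer exactly as in Theorem 3: apply Corollary 1, let $Q$ range over size-$k$ subsets of $\{\sigma_{1j}\} \cup \{\sigma_{2j}\}$ that contain at least one length-$(m+1)$ path and collectively cover all of them, and parametrize $Q$ by $\lambda_1 : N(Q_1) + \cdots + N(Q_s) = n_1$. The leftover is then a product of paths of lengths $m, m-1, \ldots, 1$ with multiplicities $n_1 + n_2 - k\ell(\lambda_1), n_3, \ldots, n_{m+1}$; this system has $m$ levels with shortest length one, so the inductive hypothesis applies. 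Substituting the inductive formula and performing the same cancellation as in the base case (with the $(n_1+n_2-k\ell(\lambda_1))!$ factors again telescoping) produces the $m+1$ formula.

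The principal obstacle will be the algebraic bookkeeping: tracking the numerator and denominator factors through the nested sums and verifying the full cancellation chain. There is no genuinely new conceptual content, since the combinatorial structure is identical to Theorem 3; the sole substantive change is that the length-one bottom layer is handled by Proposition 2 rather than Proposition 1, which is precisely what accounts for the formulaic difference between Theorems 3 and 4.
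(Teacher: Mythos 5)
Your proposal is correct and follows essentially the same inductive peeling argument as the paper's proof: the paper likewise inducts on $m$, splits the sum from Corollary 1 over a collection $Q$ of size-$k$ blocks covering the longest paths, parametrizes $Q$ by the integer partition $\lambda_1: N(Q_1)+\cdots+N(Q_s)=n_1$, and performs the same cancellations. The only (harmless) difference is that the paper takes $m=1$ as the base case, citing Proposition 2 directly, whereas you prove $m=2$ explicitly --- which is just one application of your own inductive step to that base.
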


\begin{proof}
This proof is markedly similar to that of the previous theorem; as before, induction will be used. The above formula holds for $m=1$ by Proposition 2. Now suppose the formula holds for an $m>1$, and \sloppy{$\alpha=\alpha_{11}\cdots\alpha_{1n_1}\cdots\alpha_{m+1,1}\cdots\alpha_{m+1,n_{m+1}}$} where $\alpha_{ij}$ are disjoint paths such that $\ell(\alpha_{11})=\cdots=\ell(\alpha_{1n_1})=\ell(\alpha_{21})+1=\cdots=\ell(\alpha_{2n_2})+1=\cdots=\ell(\alpha_{m+1,1})+m=\cdots=\ell(\alpha_{m+1,n_{m+1}})+m=m+1$. Let $P=\{\alpha_{11},\cdots,\alpha_{m+1,n_{m+1}}\}$. From Corollary 1 and rewriting as in the previous proof,
	\begin{equation}
	r_k(\alpha) = \sum_Q\left(\prod_{i=1}^{|Q|}\phi_k(Q_i)\sum_{\{P'_1,\cdots,P'_r\}}\prod_{i=1}^r\phi_k(P'_i)\right)
	\end{equation}
where each $Q$ is a collection of disjoint subsets of $P$ such that each set in $Q$ contains at least one element of $\{\alpha_{11},\cdots,\alpha_{1n_1}\}$ and $\{\alpha_{11},\cdots,\alpha_{1n_1}\}$ is contained in $\cup Q$, and $\{P'_1,\cdots,P'_r\}$ is a set partition of $P\backslash\cup Q$. As before, in order to exclude terms of the above sum that are zero, take the sets of $Q$ to only have elements from $\{\alpha_{11},\cdots,\alpha_{1n_1},\alpha_{21},\cdots,\alpha_{2n_2}\}$. From Corollary 1, $\sum_{\{P'_1,\cdots,P'_r\}}\prod_{i=1}^r\phi_k(P'_i)$ is the same as the number of $k$th roots of an element that is the product of paths of $m$ different, weakly varying lengths including those of length one; the multiplicities of these lengths in descending order of length are $n_1+n_2-k|Q|$, $n_3$, $\cdots$, $n_{m+1}$. By the inductive hypothesis,
	\[ \sum_{\{P'_1,\cdots,P'_r\}}\prod_{i=1}^r\phi_k(P'_i) = \sum_{\lambda_2,\cdots,\lambda_{m+1}}\frac{(n_1+n_2-k|Q|)!n_3!\cdots n_{m+1}!}{\prod_{i=2}^{m+1}\prod_{a\in\lambda_i}\freq(a,\lambda_i)!} \]
where $\lambda_2\vdash n_1+n_2-k|Q|$, $\lambda_3\vdash n_1+n_2+n_3-k|Q|-k\ell(\lambda_2)$, $\cdots$, $\lambda_{m+1}\vdash\sum_{i=1}^{m+1} n_i - k|Q| - k\sum_{i=2}^{m}\ell(\lambda_i)$, $\max\lambda_i\leq k$ for all $i$, and $\ell(\lambda_i)\leq(n_{i+1}+\sum\lambda_i)/k$ for all $i$.

 As before, for each valid $Q=\{Q_1,\cdots,Q_s\}$ on the first sum, the interior of the sum only depends on the associated integer partition $\lambda_Q: N(Q_1)+\cdots+N(Q_s)=n_1$. Hence, given a partition $\lambda_1: a_1+\cdots+a_t=n_1$ with $\max\lambda_1\leq k$ and $\ell(\lambda_1)\leq(n_1+n_2)/2$, there are
	\[ \frac{n_1!n_2!}{\prod_{a\in\lambda_1}\freq(a,\lambda_1)!a_1!\cdots a_t!(k-a_1)!\cdots(k-a_t)!(n_1+n_2-tk)!}\]
admissable $Q$ such that $\lambda_Q: N(Q_1)+\cdots+N(Q_s)=n_1$ and $\lambda_1: a_1+\cdots+a_t=n_1$ are the same partitions. So,
	\[ r_k(\alpha) = \sum_{\lambda_1}\left(\frac{\prod_{i=1}^t a_i!(k-a_i)!}{\prod_{a\in\lambda}\freq(a,\lambda_1)!}\frac{n_1!n_2!}{(n_1+n_2-k\ell(\lambda_1))!\prod_{i=1}^t a_i!(k-a_i)!}\right. \]
	\[ \left.\sum_{\lambda_2,\cdots,\lambda_{m+1}}\frac{(n_1+n_2-k\ell(\lambda_1))!n_3!\cdots n_{m+1}!}{\prod_{i=2}^{m+1}\prod_{a\in\lambda_i}\freq(a,\lambda_i)!}\right) \]
where $\lambda_1\vdash n_1$, $\lambda_2\vdash n_1+n_2-k\ell(\lambda_1)$, $\lambda_3\vdash n_1+n_2+n_3-k\ell(\lambda_1)-k\ell(\lambda_2)$, $\cdots$, $\lambda_m\vdash\sum_{i=1}^m n_i - k\sum_{i=1}^{m-1}\ell(\lambda_i)$, $\max\lambda_i\leq k$, and $\ell(\lambda_i)\leq (n_{i+1}+\sum\lambda_i)/k$ for all $i$. This simplifies to
	\[ r_k(\alpha) = \sum_{\lambda_1,\cdots,\lambda_{m+1}}\frac{n_1!n_2!n_3!\cdots n_{m+1}!}{\prod_{i=1}^{m+1}\prod_{a\in\lambda_i}\freq(a,\lambda_i)!}, \]
completing the inductive step.

\end{proof}

The previous two theorems and Theorem 2 provide the ``building blocks'' for which one may calculate the number of $k$th roots of an element of $\SIM(n)$. If an element of $\SIM(n)$ is expressed as the product of disjoint paths and cycles, one may treat the paths independent of the cycles and use the symmetric group formulae for $k$th roots to finish one part of the problem. It is then left to calculate the number of $k$th roots of the cycle-free component.

Given the cycle-free component of an element, arrange the paths in order of weakly decreasing length. The following theorem will allow the previous formulae to be used.

\begin{theorem}
Let $\sigma=\alpha_1\cdots\alpha_n$ where each $\alpha_i$ is composed of disjoint paths of weakly varying lengths such that if $\sigma_s$ is a path of $\alpha_i$ and $\sigma_t$ a path from $\alpha_j$ with $i\neq j$, then $|\ell(\sigma_s)-\ell(\sigma_t)|>1$. Then,
	\[ r_k(\sigma) = r_k(\alpha_1)\cdots r_k(\alpha_n). \]
\end{theorem}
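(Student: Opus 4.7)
The plan is to apply Corollary 1 at two levels: first to $\sigma$, to express $r_k(\sigma)$ as a sum over set partitions of the full path set, and then to each $\alpha_j$ individually, after showing that only ``block-preserving'' partitions contribute. Let $P$ be the set of all paths appearing in the disjoint path decomposition of $\sigma$, and let $P^{(j)}$ be the subset of $P$ consisting of the paths of $\alpha_j$, so that $P$ is the disjoint union $P^{(1)}\sqcup\cdots\sqcup P^{(n)}$.

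The first step is to observe that, by Lemmas 4 and 5, any block $B\subseteq P$ with $\phi_k(B)\neq 0$ consists of paths whose pairwise lengths differ by at most one: Lemma 4 gives this directly whenever $B$ contains a path of length at least two, and Lemma 5 covers the remaining all-length-one case trivially. Combined with the hypothesis that two paths drawn from distinct $\alpha_i,\alpha_j$ differ in length by more than one, this forces every interlaceable block to be contained entirely in a single $P^{(j)}$. Consequently, only those set partitions of $P$ that refine the coarser partition $\{P^{(1)},\ldots,P^{(n)}\}$ contribute a nonzero term to the sum of Corollary 1.

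Next, I would note that such refining set partitions of $P$ are in natural bijection with tuples $(\pi_1,\ldots,\pi_n)$ in which each $\pi_j$ is a set partition of $P^{(j)}$: one simply groups the blocks by which $P^{(j)}$ they occupy. Under this bijection the product $\prod_i \phi_k(B_i)$ factors as $\prod_{j=1}^n\prod_{B\in\pi_j}\phi_k(B)$, and hence
\[
r_k(\sigma) \;=\; \sum_{(\pi_1,\ldots,\pi_n)}\prod_{j=1}^n\prod_{B\in\pi_j}\phi_k(B) \;=\; \prod_{j=1}^n\left(\sum_{\pi_j}\prod_{B\in\pi_j}\phi_k(B)\right).
\]
A second application of Corollary 1, now to each $\alpha_j$, identifies the inner sum with $r_k(\alpha_j)$, giving the claimed product formula.

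The main obstacle is the careful execution of the forcing argument: one has to be sure that no ``mixed-length'' interlaceable block slips through, including the edge case of a block made entirely of length-one paths (where Lemma 5 takes over from Lemma 4). The strict inequality $|\ell(\sigma_s)-\ell(\sigma_t)|>1$ in the hypothesis is essential for this, since paths of lengths $\ell$ and $\ell+1$ really can coexist in a single $k$-interlacing.
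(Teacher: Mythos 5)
Your proposal is correct and follows essentially the same route as the paper: apply Corollary 1 to $\sigma$, use Lemmas 4 and 5 together with the length-gap hypothesis to discard every set partition containing a block that mixes paths from different $\alpha_i$, and then factor the remaining sum over block-preserving partitions into the product $\prod_j r_k(\alpha_j)$. Your write-up is in fact more explicit than the paper's about why the mixed blocks vanish (including the all-length-one edge case handled by Lemma 5), but the underlying argument is the same.
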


\begin{proof}
Let $P$ be the set of all paths of $\sigma$. From Corollary 1, $ r_k(\sigma) = \sum\prod_{i=1}^r\phi_k(P_i)$, where the sum is taken over set partitions $\{P_1,\cdots,P_r\}$ of $P$. In order to exclude some of the vanishing terms of the sum, sum over the set partitions of $P$ such that each $P_j$ in the set partition contains only paths from one of $\alpha_1,\cdots,\alpha_n$.

Now let $Q_1,\cdots,Q_n$ be sets of the paths of $\alpha_1,\cdots,\alpha_n$ respectively. By removing the vanishing terms as above, each set partition of $P$ is the union of set partitions of $Q_i$ for all $i$. Since $Q_1,\cdots,Q_n$  are disjoint, the formula from Corollary 1 becomes the product of the sums of each set partition of each $Q_i$, which becomes $ r_k(\sigma) = r_k(\alpha_1)\cdots r_k(\alpha_n)$.

\end{proof}

\section{Discussion}

Through this research, formulae were developed for the number of $k$th roots an arbitrary element in $\SIM(n)$ has. Some questions immediately follow from these results. For one, could these formulae be directly simplified, possibly by the use of integer partition identities? Also, might there be other means of computing $r_k(\sigma)$? Coming up with equivalent but different formulae for $r_k(\sigma)$ could result in some interesting identities.

The problem of enumerating $k$th roots, as well as the probability of possessing a $k$th root, may be posed for various other semigroups and groups. For instance, in \cite{kn:Ann1}, Annin et al. ask these questions for the alternating group and other subgroups of $S_n$, and similarly in \cite{kn:Ann2}, Annin et al. ask for necessary and sufficient conditions for possessing a $k$th root for elements in submonoids of $\SIM(n)$. Once one finds these conditions for $k$th roots, the enumeration problem may be asked for these submonoids.

In \cite{kn:Lea}, Lea\~nos et al. give generating formulae for the number of $k$th roots of an arbitrary element of $S_n$. Given this, what are some generating formulae for the number of $k$th roots of elements in $\SIM(n)$? Also, what are some generating formulae for the number of elements in $S_n$ that have a $k$th root?

In \cite{kn:Pav}, Pavlov derives asymptoticly equivalent formulae for the number of elements in $S_n$ that possess exactly one $k$th root. As before, one may ask for asymptotic formulae for quantities involving $k$th roots (e.g. probability of having $k$th root, $r_k(\sigma)$, etc.) in $\SIM(n)$.

The questions posed in the previous paragraphs asked for, in one form or another, simplified or alternate forms of the formulae given in this paper. Beyond these equations, there is still far more to be asked. What are some relationships among $k$th roots of an element in $\SIM(n)$? Given a graphical representation of $\SIM(n)$, possibly by means similar to Cayley graphs for groups, where do the $k$th roots of an element lie on the graph with respect to one another?

\section{Acknowledgements}
This research was funded by Lamar University's Office of Undergraduate Research Grant 2014-15 and the Ronald E. McNair Post Baccalaureate Achievement Program. Special thanks go to Dr. Valentin Andreev for mentoring and advising this project.

\noindent {\bf Christopher W. York}

\noindent Department of Mathematics

\noindent Lamar University

\noindent P.O. Box 10047

\noindent Beaumont, TX 77710

\noindent Email: {\tt cyork2@lamar.edu}

\end{document}